\newtheorem{theorem}{\sc Theorem}[section]
\newtheorem{lemma}[theorem]{\sc Lemma}
\newtheorem{proposition}[theorem]{\sc Proposition}
\begin{document}
\title[Coprime and anti-coprime  commutators]
{Strong conciseness of coprime and anti-coprime commutators}

\author{Eloisa Detomi}

\address{Dipartimento di Ingegneria dell'Informazione - DEI, Universit\`a di Padova, Via G. Gradenigo 6/B, 35121 Padova, Italy} 
\email{eloisa.detomi@unipd.it}

\author{Marta Morigi}
\address{Dipartimento di Matematica, Universit\`a di Bologna, 
Piazza di Porta San Donato 5, 40126 Bologna, Italy}
\email{marta.morigi@unibo.it}

\author{Pavel Shumyatsky}
\address{Department of Mathematics, University of Brasilia,
Brasilia-DF,   70910-900 Brazil }
\email{pavel2040@gmail.com}

\keywords{Commutators, conciseness, profinite groups, pronilpotent groups}
\subjclass{20E18; 20D25.}

\begin{abstract}
A coprime  commutator in a profinite group $G$ is an  element of the form $[x,y]$, where $x$ and $y$ have coprime order and  an anti-coprime commutator is  a  commutator $[x,y]$ such that the orders of $x$ and $y$ are divisible by the same primes.  In the present paper we establish  that a profinite group $G$ is finite-by-pronilpotent if  the cardinality of the set of coprime commutators in $G$ is 
  less than $2^{\aleph_0}$. 
Moreover, 
 a profinite group $G$ has finite commutator subgroup $G'$  if the cardinality of the set of anti-coprime commutators in $G$ is 
  less than $2^{\aleph_0}$. 
\end{abstract} 

\maketitle
\section{Introduction}
Let $w=w(x_1, \dots, x_k)$ be a group-word, that is, a nontrivial element of the free group on $x_1, \dots, x_k$. The word $w$ can be naturally viewed as a function  of $k$ variables defined on any group $G$. The subgroup generated by the set of $w$-values is called the verbal subgroup corresponding to the word $w$. We denote this by $w(G)$. In the context of topological groups $G$ we write $w(G)$ to denote the closed subgroup generated by all  $w$-values in $G$. 

The word $w$ is said to be concise in the class of groups  $\mathcal{C}$ if  the verbal subgroup  $w(G)$  is finite whenever $w$ takes only finitely many values in a group $G \in \mathcal{C}$ (see for example \cite{DMSconc1}, \cite{DMSconc2}  and references therein for results on conciseness of words). A natural variation of this notion for profinite groups was recently introduced in \cite{ben-pavel}: we say that $w$ is strongly concise in a class of profinite groups $  \mathcal{C}$ if  the verbal subgroup  $w(G)$ is finite in any group $G \in \mathcal{C}$  in which $w$ takes less than   $2^{\aleph_0}$ values. A number of new results on strong conciseness of group-words can be found in \cite{ben-pavel}. 

The concept of (strong) conciseness can be applied in a much  wider context. Suppose $  \mathcal{C}$ is a class of profinite groups and $\varphi(G)$
 is a subset of $G$ for every  $G \in  \mathcal{C}.$ One can ask whether the subgroup generated by $\varphi(G)$ is finite whenever  $|\varphi(G)| < 2^{\aleph_0}.$
 
 In the present paper we establish strong conciseness of the sets of coprime and anti-coprime commutators. By a coprime commutator in a profinite group $G$ we mean any element of the form $[x,y]$, where $\pi(x) \cap \pi(y) = \emptyset,$ and by an anti-coprime commutator we mean any commutator $[x,y]$ such that $\pi(x) = \pi(y)$.   Here and throughout the paper $\pi(x)$ stands for the set of prime divisors of the order of the procyclic group generated by  $x $. 

It is well-known that in any profinite group $G$ the coprime commutators generate the pronilpotent residual $\gamma_\infty (G)$, that is, the (unique) minimal normal subgroup $N$ such that $G/N$ is pronilpotent.
So, if a profinite group is finite-by-pronilpotent, then it has only finitely many coprime commutators. Our main result is that also the converse holds. 

\begin{theorem}\label{main}  
A profinite group $G$ is finite-by-pronilpotent if and only if the cardinality of the set of coprime commutators in $G$ is 
  less than $2^{\aleph_0}$. 
\end{theorem}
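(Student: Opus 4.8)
The ``only if'' direction is immediate: if $G$ is finite-by-pronilpotent then $\gamma_\infty(G)$ is finite, and since all coprime commutators lie in $\gamma_\infty(G)$ there are at most $|\gamma_\infty(G)|$ of them.

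For the converse I argue by contradiction: assume $G$ has fewer than $2^{\aleph_0}$ coprime commutators while $R:=\gamma_\infty(G)$ is infinite, and produce $2^{\aleph_0}$ coprime commutators. Several reductions are harmless. The image of a coprime commutator under a continuous epimorphism is again a coprime commutator (the set of primes dividing the order of a procyclic group can only shrink under quotients); $\gamma_\infty$ is preserved by continuous epimorphisms; and a coprime commutator of a closed subgroup of $G$ is a coprime commutator of $G$. Using these, one first replaces $G$ by a second countable (metrisable) quotient on which $\gamma_\infty$ is still infinite. Then, according to the structure of $R$ --- via the Sylow decomposition of the pronilpotent group $R/\gamma_\infty(R)$, a Schur--Zassenhaus splitting performed inside a suitable closed subgroup, and an inductive argument peeling off derived/lower central terms, while non-abelian composition factors are dealt with directly --- one is brought to one of a few base configurations, the principal one being: $G$ is a semidirect product of an infinite abelian pro-$p$ group $R=\gamma_\infty(G)$ by a pro-$p'$ group $K$, with $R=[R,K]$ (this last equality always holds when $\gamma_\infty(G)$ is abelian, since a central extension of a pronilpotent group is pronilpotent).

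In this configuration the task is to exhibit $2^{\aleph_0}$ coprime commutators as a single orbit. If for some $p'$-element $k\in K$ the subgroup $[R,k]=\{[r,k]:r\in R\}$ is infinite, we are done: it is a closed subgroup of a profinite group, hence of cardinality $2^{\aleph_0}$, and every element $[r,k]$ of it is a coprime commutator because $r$ has $p$-power order and $k$ is a $p'$-element. Otherwise $C_R(k)$ has finite index for every $k\in K$; then, using the Baire-category fact that a profinite group is not a strictly increasing countable union of proper closed subgroups, one gets that $K$ centralises an open subgroup of $R$. Since $R$ is abelian this forces the coprime commutators $[r,k]$ to take only finitely many values, so $R=[R,K]$ is topologically finitely generated, hence a finite group times a free module of finite positive rank over the $p$-adic integers. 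But $R=[R,K]$ means $K$ acts on that $p$-adic module with no non-trivial quotient, so some element of $K$ acts on it without non-zero fixed points and therefore has centraliser of infinite index in $R$, contradicting that every $C_R(k)$ is open. Thus $R$ was finite. The remaining base configurations are handled in the same spirit; for instance, when $R/\gamma_\infty(R)$ has infinitely many non-trivial Sylow subgroups one finds, for each such prime $p$, after conjugating into a fixed Sylow subgroup of $G$, a $p'$-element acting non-trivially on the $p$-part, then uses the Baire-category fact to obtain one element acting non-trivially on infinitely many of these parts simultaneously, and lets the first argument of the commutator range over the product of those parts.

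The step I expect to be the main obstacle is precisely this passage from ``$\gamma_\infty(G)$ infinite'' to ``$2^{\aleph_0}$ coprime commutators.'' The set of coprime commutators is neither a subgroup nor closed under multiplication, so large families cannot be built by taking products, and coprime commutators need not lift along quotients (torsion elements need not lift in profinite groups). The whole art is to organise the structural reductions so that one always works inside honest closed subgroups and so that the desired continuum appears as an \emph{orbit} $\{[a,y]:a\in A\}$ for a single $y$ --- a genuine closed subgroup, hence automatically of size $2^{\aleph_0}$ once infinite. The two technical engines are this Baire-category lemma on profinite groups and the analysis of $\gamma_\infty(G)$ as a profinite module over a pronilpotent group; I would also want to verify carefully the bookkeeping in the reductions, in particular that $p'$-elements lift through the quotients used (since Sylow subgroups map onto Sylow subgroups).
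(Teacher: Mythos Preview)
Your sketch contains two factual errors that undercut the plan. First, coprime commutators \emph{do} lift along quotients: given $[\bar x,\bar y]$ in $G/N$ with $\pi(\bar x)\cap\pi(\bar y)=\emptyset$, lift $\bar x$ to a $\pi(\bar x)$-element $x$ and $\bar y$ to a $\pi(\bar y)$-element $y$ (always possible in profinite groups); then $[x,y]$ is a coprime commutator in the same coset. Second, and more importantly, you explicitly rule out building large families by products, yet this is exactly the mechanism the paper exploits. If $c_i=[a_i,b_i]$ are coprime commutators with the subgroups $\langle a_i,b_i\rangle$ pairwise commuting and with $\bigl(\bigcup_i\pi(a_i)\bigr)\cap\bigl(\bigcup_i\pi(b_i)\bigr)=\emptyset$, then $\prod_{i\in I}c_i=[\prod_{i\in I}a_i,\prod_{i\in I}b_i]$ is again a coprime commutator for every $I$; an infinite such family yields $2^{\aleph_0}$ of them at once. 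The paper uses this twice: to show $\pi(\gamma_\infty(G))$ is finite in the metapronilpotent case (your ``infinitely many Sylow subgroups'' configuration), and to show $F(G)\ne 1$ by ruling out an infinite semisimple socle. Your proposed alternative for the first of these --- finding via Baire a single element acting nontrivially on infinitely many Sylow parts --- does not follow from Baire category in any obvious way.

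Even in your principal base configuration the argument has a gap. From ``$C_R(k)$ is open for every $k\in K$'' you cannot conclude that $K$ centralises an open subgroup of $R$: an intersection over uncountably many $k$ need not be open. The paper's Lemma~\ref{ph} applies Baire on the \emph{acting} group $H$ to find an open $J\trianglelefteq H$ and a uniform bound $|Q:C_Q(x)|\le s^2$ for all $x\in J$; a separate quantitative lemma (Lemma~\ref{zero}) then shows $[Q,J]$ is finite, after which one may pass to $H/J$ finite and intersect finitely many centralisers. Finally, your reductions from general $G$ to the base configurations are asserted but not carried out; in the paper this is the bulk of the work (showing $F(G)$ is infinite, that $F_2(G)/F(G)$ is finite via the metapronilpotent case, and then inducting on $|G:F(G)|$, with a separate treatment when $G/F(G)$ is simple). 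Without the product trick and without a correct Baire step, the plan as written does not go through.
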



Remark that in the particular case of the group $G$ with only finitely many coprime commutators the above theorem is straightforward from the main result of 
\cite{AST}. 

The set of anti-coprime commutators generates  the commutator subgroup $G'$ of $G$. Hence if $G'$ is finite, then so is this set. 
As a consequence  of the results in  \cite{ben-pavel}, we have the following new characterization of finite-by-abelian groups. 
 
 \begin{theorem}\label{anti-main}  
A profinite group $G$ has finite commutator subgroup $G'$  if and only if the cardinality of the set of anti-coprime commutators in $G$ is 
  less than $2^{\aleph_0}$. 
\end{theorem}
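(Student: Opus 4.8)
The forward implication needs no argument: if $G'$ is finite then $G$ has only finitely many commutators, hence only finitely many anti-coprime commutators, and in particular fewer than $2^{\aleph_0}$ of them. The plan for the converse is to reduce it to the strong conciseness of the commutator word $[x,y]$, established in \cite{ben-pavel}: assuming the set $T$ of anti-coprime commutators of $G$ satisfies $|T|<2^{\aleph_0}$, I will show that the set $C$ of \emph{all} commutators of $G$ also satisfies $|C|<2^{\aleph_0}$, which then forces $G'$ to be finite.

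Two soft observations are used repeatedly. First, $C$ is the continuous image of the compact space $G\times G$ under the commutator map, hence $C$ is closed in $G$; therefore $C$ is either finite or of cardinality $2^{\aleph_0}$, and ``$C$ finite'' is equivalent to ``$G'$ finite'' by conciseness of $[x,y]$. Second, both $T$ and the set $K$ of coprime commutators are invariant under inversion and conjugation, since $[x,y]^{-1}=[y,x]$, $[x,y]^{g}=[x^{g},y^{g}]$ and $\pi(x^{g})=\pi(x)$; so no separate bookkeeping of inverses and conjugates is needed when counting products of such elements.

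The heart of the matter, and the step I expect to be the main obstacle, is the purely algebraic claim that there is an absolute constant $k$ (in fact $k=4$ should work) such that every commutator of $G$ is a product of at most $k$ anti-coprime commutators; granting this, $C\subseteq T^{k}$ as a product set, whence $|C|\le|T|^{k}<2^{\aleph_0}$, and we are done. To attack it, fix $a,b\in G$ and set $\sigma=\pi(a)$, $\tau=\pi(b)$. Since the procyclic group generated by $a$ is the direct product of its Sylow subgroups, write $a=a_{1}a_{2}$, where $a_{1}$ is the $\tau$-part and $a_{2}$ the $\tau'$-part of $a$; then $a_{1},a_{2}$ are commuting powers of $a$ with $\pi(a_{1})=\sigma\cap\tau$ and $\pi(a_{2})=\sigma\setminus\tau$. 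Symmetrically write $b=b_{1}b_{2}$, with $b_{1}$ the $\sigma$-part and $b_{2}$ the $\sigma'$-part of $b$, so $\pi(b_{1})=\sigma\cap\tau$ and $\pi(b_{2})=\tau\setminus\sigma$. The commutator identities $[xy,z]=[x,z]^{y}[y,z]$ and $[x,yz]=[x,z][x,y]^{z}$ then give
\[
[a,b]=[a_{1},b_{2}]^{a_{2}}\,[a_{1},b_{1}]^{b_{2}a_{2}}\,[a_{2},b_{2}]\,[a_{2},b_{1}]^{b_{2}}.
\]
Here the second factor is (a conjugate of) an anti-coprime commutator, because $\pi(a_{1})=\pi(b_{1})=\sigma\cap\tau$, while each of the other three factors is (a conjugate of) a \emph{coprime} commutator, since the relevant prime sets are pairwise disjoint. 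Thus the claim reduces to showing that every coprime commutator is a product of a bounded number of anti-coprime commutators — the small-group evidence suggests it is in fact always a single one, i.e. that given $[u,v]$ with $\pi(u)\cap\pi(v)=\emptyset$ one can exhibit $u',v'$ with $\pi(u')=\pi(v')$ and $[u',v']=[u,v]$. This is where I expect the real work to lie; the available tools are identities such as $[u,u^{m}v]=[u,v]=[v^{m}u,v]$ (valid for every power $m$), together with a careful choice — made inside $\overline{\langle u,v\rangle}$, or inside a suitable Sylow subgroup of a larger group — of elements realizing the required equality of prime supports. Obtaining this with an \emph{absolute} bound is the delicate point.

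As a consistency check and a possible alternative, note that $D=\{(x,y)\in G\times G:\pi(x)=\pi(y)\}$ is a $G_{\delta}$ (because the set of $p'$-elements of a profinite group is closed, for every prime $p$), so $T$, being the continuous image of $D$ under the commutator map, is an analytic subset of $G$; by the perfect set property of analytic sets, $|T|<2^{\aleph_0}$ already forces $T$ to be countable, and a Baire-category bootstrapping in the spirit of \cite{ben-pavel} would then upgrade ``$T$ countable'' to ``$T$ finite''. Either way, the crux is the interaction between the prime supports of the two entries of a commutator and the element they produce, and it is the coprime-to-anti-coprime reduction that I would expect to absorb most of the effort.
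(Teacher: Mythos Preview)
Your reduction leaves the decisive step unproven: you need every coprime commutator $[u,v]$ (with $\pi(u)\cap\pi(v)=\emptyset$) to be a product of a \emph{bounded} number of anti-coprime commutators, and you acknowledge you do not know how to do this. The identities you suggest, e.g.\ $[u,u^{m}v]=[u,v]$, do not help in a general profinite group because $u$ and $v$ need not commute, so $\pi(u^{m}v)$ is not under control; there is no evident mechanism forcing $\pi(u^{m}v)=\pi(u)$. Your descriptive-set-theory alternative likewise only upgrades ``fewer than $2^{\aleph_0}$'' to ``countable'' and does not touch this gap.

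The paper sidesteps the problem entirely with a single identity you missed:
\[
[g,h,h]=\bigl[(h^{-1})^{g},\,h\bigr]^{h},
\]
which exhibits every $2$-Engel value as a (conjugate of a) commutator of two elements with the \emph{same} prime support, since $\pi\bigl((h^{-1})^{g}\bigr)=\pi(h)$. Thus the set of $2$-Engel values is contained in the set of anti-coprime commutators, so it too has cardinality below $2^{\aleph_0}$. By the strong conciseness of the $2$-Engel word (established in \cite{ben-pavel}), the corresponding verbal subgroup $N$ is finite; passing to $G/N$, the group becomes $2$-Engel and hence pronilpotent. In a pronilpotent group elements of coprime order commute, so in your own four-term decomposition the three ``coprime'' factors are trivial and every commutator is already anti-coprime; now the strong conciseness of $[x,y]$ finishes. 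The moral: rather than turning coprime commutators into anti-coprime ones by hand, one first kills the obstruction to pronilpotency using the $2$-Engel word, after which the two notions of commutator coincide.
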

 
 It follows from the proof of Theorem \ref{anti-main} that if the profinite group $G$ has only finitely many, say $m$, anti-coprime commutators, then the order of $G'$ is $m$-bounded. Throughout the paper we use the expression ``$m$-bounded" to mean that a quantity is finite and bounded by a certain number depending only on  the parameter $m$.

To conclude this short introduction, we mention  a related problem. Let  $\gamma_k(G)$  denote the  $k$th term of the lower central series of a group $G$ and $G^{(k)}$  that of the derived series of $G$. In view of our results it would be interesting to find a small strongly concise set of generators for each of these subgroups.

\section{Preliminaries}\label{prel}  

Many  results of the theory of finite groups admit a natural interpretation for
profinite groups. Throughout the paper we use certain profinite versions of
facts on finite groups without explaining in detail how the results on profinite
groups can be deduced from the corresponding finite cases. On all such
occasions the deduction can be performed via the routine inverse limit argument.

Our notation 
 is  standard (see, for example, \cite{rib-zal} or \cite{wilson}). Every homomorphism of profinite groups considered in this paper is continuous, and every subgroup of a profinite group is closed.

If $G$ is a profinite group, then $|G|$ denotes its order, which is a  Steinitz number, and $\pi(G)$ denotes the set of prime divisors of $|G|$. Similarly, if $g$ is an element of $G$, then $|g|$ and $\pi(g)$ respectively  denote the order of the procyclic subgroup generated by $g$ and the set of prime divisors of $|g|$.


Profinite groups have $p$-Sylow subgroups and satisfy analogues of the Sylow theorems.
Prosoluble groups have  $\pi$-Hall  subgroups and Sylow bases. 
 If $G$ is a profinite group and $\pi$ a set of primes,  $O_{\pi} (G)$ stands for  the maximal normal pro-$\pi$ subgroup of $G$.

 Each profinite group $G$ has a maximal pronilpotent normal subgroup, its Fitting subgroup $F(G)$. Set $F_0(G)=1$ and $F_{i+1}(G)/F_i(G)=F(G/F_i(G))$ for every integer $i\ge 0$. 
  A profinite group $G$ 
 is metapronilpotent if and only if it $F_2(G)=1$ 
   or, equivalently, if and only if $\gamma_\infty (G)$ is pronilpotent.

 One can easily see that if $N$ is a normal subgroup of $G$ and $g$ is an element whose image in $G/N$
is a coprime  (resp. anti-coprime) commutator,
then there exists a coprime  (resp. anti-coprime) commutator $h$ such that $g\in hN$. 
  It follows  that if a  profinite group has   less than $2^{\aleph_0}$ coprime  (resp. anti-coprime) commutators, then this condition is
inherited by  every homomorphic image of the group, and we shall use this property without special reference. 
\begin{lemma}\cite[Lemma 2.2]{ben-pavel}  \label{lem:fin-conj-cl} Let $G$ be a profinite group and let $x \in G$.  If the conjugacy class $x^G$ contains less than $2^{\aleph_0}$ elements, then it is finite.
\end{lemma}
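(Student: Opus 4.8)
The plan is to realize the conjugacy class $x^G$, as a topological space, as a coset space of the centralizer of $x$, and then to use that such a space is compact \emph{and} topologically homogeneous. Concretely, conjugation $\theta\colon G\to G$, $g\mapsto x^g=g^{-1}xg$, is continuous, so $x^G=\theta(G)$ is compact, hence closed in $G$. The fibres of $\theta$ are exactly the right cosets of the closed subgroup $C_G(x)$, so $\theta$ factors as $\theta=\bar\theta\circ q$, where $q\colon G\to C_G(x)\backslash G$ is the canonical map and $\bar\theta\colon C_G(x)\backslash G\to x^G$ is a continuous bijection. Since $C_G(x)\backslash G$ is compact (a continuous image of $G$) and $x^G$ is Hausdorff (a subspace of $G$), the map $\bar\theta$ is a homeomorphism; in particular $x^G$ is a profinite space. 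It therefore suffices to show that if $x^G$ is infinite then $|x^G|\ge 2^{\aleph_0}$.

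To this end I would first exploit homogeneity: $G$ acts transitively on $x^G$ by conjugation, and each $h\in G$ acts as the homeomorphism $y\mapsto y^h$. Hence if some point of $x^G$ were isolated, every point would be isolated, so $x^G$ would be discrete, and, being compact, finite. So, assuming $x^G$ infinite, $x^G$ is a nonempty compact Hausdorff space with no isolated points.

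Next I would invoke the classical fact that a nonempty compact Hausdorff space $X$ without isolated points has $|X|\ge 2^{\aleph_0}$. Using regularity of $X$ together with the absence of isolated points, one recursively builds a Cantor scheme: a family $\{U_\sigma:\sigma\in\{0,1\}^{<\omega}\}$ of nonempty open subsets of $X$ with $\overline{U_{\sigma 0}}\cup\overline{U_{\sigma 1}}\subseteq U_\sigma$ and $\overline{U_{\sigma 0}}\cap\overline{U_{\sigma 1}}=\emptyset$. Then, by compactness, for each infinite binary string $\beta$ the intersection of the closed sets $\overline{U_\sigma}$ over the initial segments $\sigma$ of $\beta$ is nonempty, and distinct $\beta$ yield disjoint such intersections; this produces $2^{\aleph_0}$ points of $X$. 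Applying this with $X=x^G$ completes the proof.

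The verification that $\bar\theta$ is a homeomorphism, and the Cantor-scheme estimate, are routine. The point I would flag as the heart of the matter is the interplay between compactness — which is exactly where the hypothesis that $G$ is profinite (hence compact) enters — and homogeneity: together these rule out an infinite $x^G$ of cardinality strictly between $\aleph_0$ and $2^{\aleph_0}$, in particular a countably infinite conjugacy class, which could otherwise occur for a merely Hausdorff, non-compact topological group.
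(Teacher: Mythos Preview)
Your argument is correct. Note, however, that the paper does not actually prove this lemma: it simply quotes it as \cite[Lemma 2.2]{ben-pavel}. So there is no ``paper's own proof'' to compare against here. That said, your approach is exactly the one the authors have in mind: in the very next lemma (Lemma~\ref{lem:oss0}) they write ``As $x^H$ is in bijection with the coset space $H/C_H(x)$, a homogeneous profinite space, we conclude that $x^H$ is finite,'' which is precisely your identification of $x^G$ with $C_G(x)\backslash G$ followed by the observation that a homogeneous compact Hausdorff space with fewer than $2^{\aleph_0}$ points must be finite. The Cantor-scheme argument you sketch to establish the latter fact is the standard one and is fine.
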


The following lemma is an important observation used throughout this paper. 

\begin{lemma} \label{lem:oss0} Let $G$ be a profinite group  in which the set   of coprime commutators  contains less than $2^{\aleph_0}$ elements. 
 If $x \in G$ and $H$ is a subgroup of $G$ such that $(|x|, |H|)=1$,   then $C_H(x)$ has finite index in $H$. 
\end{lemma}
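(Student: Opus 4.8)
The plan is to exploit the fact that conjugating $x$ by elements of $H$ produces coprime commutators, turning the hypothesis into a bound on $[H:C_H(x)]$.

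First I would observe that for every $h\in H$ we have $\pi(h)\subseteq\pi(H)$, and since $(|x|,|H|)=1$ this gives $\pi(x)\cap\pi(h)=\emptyset$; hence each element $[x,h]$ with $h\in H$ is a coprime commutator of $G$. In particular the set $S=\{[x,h]:h\in H\}$ has cardinality less than $2^{\aleph_0}$.

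Next I would count $S$ in terms of $C_H(x)$. Writing $[x,h]=x^{-1}x^{h}$, one sees that $[x,h_1]=[x,h_2]$ if and only if $x^{h_1}=x^{h_2}$, i.e. $h_1h_2^{-1}\in C_G(x)\cap H=C_H(x)$. Thus the fibres of the map $h\mapsto[x,h]$ are precisely the cosets of $C_H(x)$ in $H$, so $|S|=[H:C_H(x)]$. Together with the previous paragraph this yields $[H:C_H(x)]<2^{\aleph_0}$.

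Finally I would upgrade ``$<2^{\aleph_0}$'' to ``finite''. The subgroup $C_H(x)$ is closed in the profinite group $H$, being the intersection of $H$ with the closed subgroup $C_G(x)$. A closed subgroup of a profinite group whose index is less than $2^{\aleph_0}$ is open: the coset space $H/C_H(x)$ is a nonempty profinite space on which $H$ acts transitively by translations, hence is topologically homogeneous, and a homogeneous compact Hausdorff space is either finite or perfect while a nonempty perfect compact Hausdorff space has at least $2^{\aleph_0}$ points (this is the principle already used in Lemma \ref{lem:fin-conj-cl}). Hence $H/C_H(x)$ is finite, i.e. $C_H(x)$ has finite index in $H$. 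The only non-formal step is this last one — the passage from cardinality $<2^{\aleph_0}$ to finiteness, which is exactly where the profinite hypothesis (rather than mere abstract-group structure) is used; everything else is a routine coset count.
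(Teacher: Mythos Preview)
Your proof is correct and follows essentially the same route as the paper's: identify $\{[x,h]:h\in H\}$ with the orbit $x^H$ (equivalently with the coset space $H/C_H(x)$), observe that this set consists of coprime commutators and hence has cardinality below $2^{\aleph_0}$, and then invoke the homogeneous profinite space argument to conclude finiteness. The paper presents this more tersely, but the ideas and their order are the same.
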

\begin{proof} 
Since $[x,h]=x^{-1}x^h$,  the cardinality of the set $x^H=\{x^h \mid h \in H \}$   is equal to the cardinality of the subset $\{ [x,h] \mid h \in H\}$ of 
 the set of coprime commutators of $G$.  Thus $x^H$  contains less than $2^{\aleph_0}$ elements. 
As $x^H$ is in bijection with the coset space $H/ C_H(x)$, a homogeneous profinite space, we conclude that $x^H$ is finite. 
\end{proof}

If a group $A$ acts on the group $G$ via automorphisms,  the subgroup generated by elements of the form $g^{-1} g ^a$ with $g\in G$  and $a\in A$ is denoted by $[G, A]$. It is well-known that 
the subgroup $[G, A]$ is an $A$-invariant normal subgroup of $G$. 
The symbol $\Phi(G)$ stands for the Frattini subgroup of a group $G$. 

The following result is well-known and will be often used without reference.

\begin{lemma}\cite[Lemma 4.29]{isaacs}\label{coprime} Let $A$ act via automorphisms on $G$, where $A$ and $G $ are
finite groups such that $(|G|, |A|) = 1$. Then $[G, A, A] = [G, A].$\end{lemma}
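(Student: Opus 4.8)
The plan is to prove the two inclusions separately. The inclusion $[G,A,A]\subseteq [G,A]$ is immediate, since $[G,A]$ is a normal $A$-invariant subgroup of $G$ and hence $[[G,A],A]\le [G,A]$. All the work lies in the reverse inclusion, and here I would first reduce to a special case. Put $K=[G,A,A]$; as $[G,A]$ is a normal $A$-invariant subgroup of $G$, so is $K$, and passing to $\bar G=G/K$ the action of $A$ stays coprime while $[\bar G,A,A]=1$. If one knows that a coprime action satisfying $[\bar G,A,A]=1$ forces $[\bar G,A]=1$, then $[G,A]\subseteq K=[G,A,A]$, which together with the first inclusion finishes the proof.

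It therefore remains to show: if $A$ acts on the finite group $G$ with $(|G|,|A|)=1$ and $[G,A,A]=1$, then $[G,A]=1$. The hypothesis $[G,A,A]=1$ says precisely that every commutator $[g,a]$ (with $g\in G$, $a\in A$) lies in $C_G(A)$, i.e.\ is fixed by $A$. Fix $g\in G$ and $a\in A$, and set $c=[g,a]=g^{-1}g^a$, so that $c^a=c$. A straightforward induction on $k$ then gives $g^{a^k}=gc^k$ for all $k\ge 0$: the inductive step reads $g^{a^{k+1}}=(gc^k)^a=g^a(c^a)^k=(gc)c^k=gc^{k+1}$. Taking $k=|a|$ yields $g=gc^{|a|}$, hence $c^{|a|}=1$. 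Since $|a|$ divides $|A|$ and $(|A|,|G|)=1$ while $c\in G$, the order of $c$ divides $(|a|,|G|)=1$, so $c=1$. As $g$ and $a$ were arbitrary, $[G,A]=1$.

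I expect the only slightly delicate point to be the verification that the reduction step is legitimate — namely that $[G,A,A]$ is normal in $G$ and $A$-invariant, so that $G/[G,A,A]$ carries a well-defined (and still coprime) $A$-action — but this is routine, following from the fact that $[N,A]$ is a normal $A$-invariant subgroup of $G$ whenever $N$ is. Everything else is elementary, and coprimality is used exactly once, in the final step where it forces the element $c$, whose order divides $|a|$, to be trivial.
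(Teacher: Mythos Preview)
Your proof is correct. The paper itself does not supply a proof of this lemma at all; it simply cites \cite[Lemma 4.29]{isaacs} and moves on. What you have written is in fact essentially the standard proof (and is close to the argument Isaacs gives): reduce to the case $[G,A,A]=1$ by passing to the quotient by $[G,A,A]$, then use the identity $g^{a^k}=g\,[g,a]^k$ (valid because $[g,a]$ is $A$-fixed) together with coprimality to force $[g,a]=1$. So there is nothing to compare---you have filled in a proof the paper omitted, and done so cleanly.
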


\begin{lemma}\label{Ncentral}
 Let $G=QH$ be a  group, where $H$ is a subgroup and 
  $Q$ is a normal subgroup of $G$
  such that $Q=[Q,H]$. If $N$ is a normal subgroup of $Q$ such that $[N,H]=1$, 
   then $N$ is contained in the center of $G$.
\end{lemma}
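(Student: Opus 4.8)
The plan is to reduce the statement to the single assertion $[N,Q]=1$ and then to extract this from the three subgroups lemma (Hall--Witt identity). Indeed, since $G=QH$, an element of $N$ that commutes with every element of $Q$ and with every element of $H$ commutes with every product $qh$ ($q\in Q$, $h\in H$), hence lies in the centre $Z(G)$. As the hypothesis already gives $[N,H]=1$, it therefore suffices to prove that $N$ centralizes $Q$.

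To obtain $[N,Q]=1$ I would apply the three subgroups lemma to the subgroups $Q$, $N$ and $H$, in the form ``$[X,Y,Z]=1$ and $[Y,Z,X]=1$ imply $[Z,X,Y]=1$''. First, since $N$ is normal in $Q$ one checks that $[Q,N]\le N$: for $q\in Q$ and $n\in N$ we have $[q,n]=(q^{-1}n^{-1}q)n=(n^{q})^{-1}n\in N$. Consequently
\[
[Q,N,H]=[[Q,N],H]\le[N,H]=1 .
\]
Secondly, the hypothesis $[N,H]=1$ gives immediately
\[
[N,H,Q]=[[N,H],Q]=1 .
\]
The three subgroups lemma now forces $[H,Q,N]=[[H,Q],N]=1$. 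Since commutator subgroups are symmetric and $[Q,H]=Q$ by hypothesis, $[H,Q]=Q$, so this says exactly $[Q,N]=1$.

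Combining $[Q,N]=1$ with $[N,H]=1$, we conclude that $N$ centralizes $\langle Q,H\rangle=G$, that is, $N\le Z(G)$, as claimed. I do not expect a genuine obstacle here: the proof is essentially a bookkeeping exercise in which iterated commutator vanishes, the only substantive point being the observation $[Q,N]\le N$, which is just the normality of $N$ in $Q$. In the profinite setting the same argument goes through verbatim, working with closed subgroups and using continuity of the commutator map so that $[Q,H]=Q$ (closure understood) may be used as above.
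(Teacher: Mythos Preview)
Your proof is correct and follows essentially the same route as the paper: both arguments invoke the Three Subgroups Lemma, using normality of $N$ in $Q$ to get $[Q,N]\le N$ and hence $[Q,N,H]\le[N,H]=1$, together with $[N,H,Q]=1$, to conclude $[[Q,H],N]=[Q,N]=1$. The paper's version is more terse (it compresses your reduction step and the verification $[Q,N]\le N$ into a single displayed inequality), but the substance is identical.
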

\begin{proof}
By the Three Subgroup Lemma (see \cite[5.1.10]{robinson}) we have that $[Q,H,N]\le [H,N,Q][N,Q,H]\le[H,N]=1$. Thus, $N$ centralizes $[Q,H]=Q$ and the result follows.
\end{proof}


\begin{lemma} \cite[Lemma 2.3]{ijac16} \label{coprimePhi}
Let $V$be an elementary abelian finite $p$-group, and $U$ a $p'$-group of automorphisms
of V$ $. If $|[V, u]|\le m$ for every $u\in U$ , then $|[V, U ]|$ is $m$-bounded, and therefore $|U |$ is
also $m$-bounded. \end{lemma}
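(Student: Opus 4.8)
The plan is to reduce everything to bounding the $\mathbb{F}_p$-dimension of $[V,U]$. Since $U$ is a $p'$-group acting on the $p$-group $V$, the action is coprime, so $V=[V,U]\oplus C_V(U)$ with $U$ centralising $C_V(U)$. Put $W:=[V,U]$. As $u$ acts trivially on $C_V(U)$ we have $[W,u]=(u-1)W=(u-1)V=[V,u]$ for all $u\in U$, so $W$ still satisfies $|[W,u]|\le m$; moreover $[W,U]=[V,U,U]=[V,U]=W$ by coprimality, i.e. $C_W(U)=0$, and $U$ acts faithfully on $W$ because $C_U(W)=C_U([V,U])=C_U(V)=1$. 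Now $U\le\mathrm{Aut}(W)=\mathrm{GL}(\dim W,p)$ and $|W|=p^{\dim W}$, so it is enough to bound $\dim W$ in terms of $d:=\log_p m$: the $m$-boundedness of $|[V,U]|=|W|$ and then of $|U|$ follows at once (and $p\le m$ anyway, as $[V,u]$ is a nonzero $\mathbb{F}_p$-subspace whenever $u\ne 1$). From now on $W$ is a completely reducible $\mathbb{F}_p U$-module with no trivial composition factor, and $\dim[W,u]\le d$ for every $u$.

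Write $W=T_1^{m_1}\oplus\dots\oplus T_k^{m_k}$ as the sum of its homogeneous components, with the $T_i$ pairwise non-isomorphic nontrivial irreducible $\mathbb{F}_p U$-modules, each occurring in $W$ as a submodule. The multiplicities and their number are controlled by elementary counting. For $m_i$: choosing $u$ acting nontrivially on $T_i$ gives $m_i\le m_i\dim[T_i,u]=\dim[T_i^{m_i},u]\le d$. For $k$: let $K_i:=C_U(T_i)$, a normal subgroup of $U$; then $\bigcap_i K_i=C_U(W)=1$ while $[U:K_i]\ge 2$ since $T_i$ is nontrivial, so
\[
\frac{1}{|U|}\sum_{u\in U}|\{i:u\in K_i\}|=\sum_{i=1}^{k}[U:K_i]^{-1}\le \frac{k}{2},
\]
whence some $u$ lies outside at least $k/2$ of the $K_i$; for such $u$ we get $d\ge\dim[W,u]\ge\sum_{i:\,u\notin K_i}m_i\ge k/2$, so $k\le 2d$.

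It remains to bound $\dim T_i$. Write $T:=T_i$ and $G:=U/K_i$, so that $T$ is a faithful irreducible $\mathbb{F}_p G$-module with $\dim[T,g]\le d$ for all $g\in G$. Suppose first that $G$ has a nontrivial abelian normal subgroup $A$, as happens whenever $F(G)\ne 1$ (take $A=Z(F(G))$). Then $C_T(A)$ is a $G$-submodule of the irreducible module $T$ and is not all of $T$, since $A\ne 1$ acts faithfully; hence $C_T(A)=0$. After extending scalars, $T\otimes\overline{\mathbb{F}}_p$ becomes a sum of nontrivial linear characters $\lambda$ of $A$, and the same style of counting — now using $|\lambda(A)|\ge 2$ — gives
\[
\frac{1}{|A|}\sum_{a\in A}\dim C_{T\otimes\overline{\mathbb{F}}_p}(a)=\sum_{\lambda}|\lambda(A)|^{-1}\le\frac{1}{2}\dim T,
\]
so some $a\in A$ has $\dim[T,a]\ge\frac{1}{2}\dim T$ and therefore $\dim T\le 2d$. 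The remaining possibility is $F(G)=1$, in which case the generalised Fitting subgroup $F^*(G)$ is a direct product of nonabelian simple groups; restricting $T$ to one of its components via Clifford's theorem reduces matters to a nonabelian simple group acting faithfully on an irreducible module. Here the counting trick fails, and I would appeal instead to the representation theory of finite simple groups — for instance to the fact that a nonabelian simple group contains an element whose commutator subspace on any nontrivial module is a fixed proportion of that module, or, through the classification, to the fact that a nonabelian simple group whose prime divisors are all at most $m$ has $m$-bounded order. Combining the three estimates, $\dim[V,U]=\dim W=\sum_i m_i\dim T_i\le k\cdot d\cdot 2d\le 4d^3$, which is $d$-bounded; this completes the argument.

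I expect this last case — a faithful irreducible module of an essentially simple group — to be the main obstacle. All the other ingredients (the coprime decomposition, passage to a faithful module, the homogeneous decomposition, Clifford's theorem, and the three counting arguments) are routine, but a simple group has no nontrivial abelian normal subgroup to average over, so one genuinely needs an input from the modular or ordinary representation theory of finite simple groups, or a classification-based bound on which simple sections can occur. Lemma~\ref{coprime} is used implicitly in the identity $[V,U,U]=[V,U]$ of the first paragraph.
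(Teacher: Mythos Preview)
The paper does not prove this lemma: it is quoted from Khukhro--Shumyatsky \cite{ijac16} with no argument given, so there is no ``paper's own proof'' to compare against. What follows is therefore an assessment of your attempt on its own terms.

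Your reductions are clean and correct: passing to $W=[V,U]$ with $C_W(U)=0$ and $U$ faithful, the isotypic decomposition, and the two averaging arguments bounding the multiplicities $m_i\le d$ and the number of components $k\le 2d$ are all valid. The treatment of an irreducible factor $T$ when $F(G)\ne 1$, via a nontrivial abelian normal subgroup $A$ with $C_T(A)=0$, is also correct and yields $\dim T\le 2d$.

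The gap you flag is genuine as the argument stands: when $G=U/K_i$ has $F(G)=1$ your averaging tricks no longer apply, and you leave the proof unfinished. Your hinted CFSG route does close the gap, but you omit the observation that makes it work: if a prime $q$ divides $|U|$ and $g\in U$ has order $q$, then every nontrivial irreducible $\mathbb{F}_p\langle g\rangle$-submodule of $V$ has dimension $\mathrm{ord}_q(p)$, whence $\mathrm{ord}_q(p)\le\dim[V,g]\le d$ and $q\mid p^j-1$ for some $j\le d$, so $q\le p^d\le m$. Thus every prime divisor of $|U|$, and in particular of every nonabelian simple section of $G$, is at most $m$; by the classification such simple groups have $m$-bounded order, and a faithful irreducible module then has $m$-bounded dimension. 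With this addition your argument is complete, at the cost of invoking the classification; the proof in \cite{ijac16} avoids this and is considerably shorter.
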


\begin{lemma} \label{zero} Assume that $G=QH$ is a finite group with a normal  nilpotent subgroup $Q$ and a subgroup $H$ 
such that   $(|Q|,|H|)=1$ and  $|Q:C_Q(x)|\leq m$ for all $x\in H$. Then the order of $[Q,H]$ is $m$-bounded.
\end{lemma}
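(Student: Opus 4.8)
The plan is to cut the problem down, in stages, to situations handled by Lemmas \ref{coprime}, \ref{coprimePhi} and \ref{Ncentral}. Since $Q$ is nilpotent, $Q=\prod_p Q_p$ with $Q_p=O_p(Q)$ characteristic in $Q$, so $[Q,H]=\prod_p[Q_p,H]$; thus it suffices to bound $|[Q_p,H]|$ for each $p$ and to show that $[Q_p,H]\neq 1$ for only $m$-boundedly many primes $p$. Fix $p$, write $P=Q_p$ (the hypothesis descends to $P$ because $C_P(x)=C_Q(x)\cap P$), and set $V=P/\Phi(P)$, an elementary abelian $p$-group on which $H$ induces a $p'$-group $U$ of automorphisms. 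Using the standard coprime identity $C_{P/\Phi(P)}(x)=C_P(x)\Phi(P)/\Phi(P)$ and the decomposition $V=[V,u]\times C_V(u)$, one gets $|[V,u]|=|P:C_P(x)\Phi(P)|\le m$ for every $u\in U$, so Lemma \ref{coprimePhi} gives that $|[V,U]|$ and $|U|$ are both $m$-bounded.

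From this I would draw the two consequences needed. First, since a $p'$-group of automorphisms acting trivially on $P/\Phi(P)$ acts trivially on $P$, we have $C_H(V)=C_H(P)$; hence $\overline H:=H/C_H(P)\cong U$ is $m$-bounded and $[P,H]=[P,\overline H]$. Second, if $[P,H]\neq 1$ then $[V,U]\neq 1$, so $p\le|[V,U]|$ is $m$-bounded; consequently only $m$-boundedly many primes contribute to $[Q,H]$, and once each contribution is shown to have $m$-bounded order the proof is finished.

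It then remains to bound $|W|$, where $W:=[P,\overline H]$. By Lemma \ref{coprime}, $W=[W,\overline H]$, and the hypothesis descends to $W$ since $C_W(x)=C_P(x)\cap W$. As $\overline H$ is $m$-bounded, $C_W(\overline H)=\bigcap_{x\in\overline H}C_W(x)$ has $m$-bounded index in $W$; replacing it by its normal core $D$ in $W$ we obtain an $m$-bounded index subgroup $D$ that is normal in $W\overline H$ and satisfies $[D,\overline H]=1$. Lemma \ref{Ncentral} then forces $D\le Z(W)$, so $|W:Z(W)|$ is $m$-bounded, and by Schur's theorem $|W'|$ is $m$-bounded. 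Finally $W/W'=[W/W',\overline H]$ is abelian under a coprime $\overline H$-action, whence $C_{W/W'}(\overline H)=1$ and $|W/W'|\le m^{|\overline H|}$ is $m$-bounded; therefore $|[Q_p,H]|=|W|=|W/W'|\,|W'|$ is $m$-bounded, and multiplying over the boundedly many relevant primes completes the argument.

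I expect the last stage to be the crux. Lemma \ref{coprimePhi} only controls the Frattini quotient $P/\Phi(P)$ — equivalently, the number of generators of $W$ — and knowing in addition that the induced automorphism group $U$ is small still does not bound $|W|$, since a boundedly generated $p$-group may carry a small coprime automorphism group and yet be large. The point that closes this gap is the production of a large central subgroup of $W$, obtained by combining $W=[W,\overline H]$ with Lemma \ref{Ncentral} applied to the normal core of $C_W(\overline H)$, after which Schur's theorem does the rest. One should also keep a careful eye on the routine verification that each reduction — from $Q$ to $Q_p$, from $H$ to $H/C_H(Q_p)$, from $Q_p$ to $W$, and from $W$ to $W/W'$ — preserves the hypothesis $|Q:C_Q(x)|\le m$.
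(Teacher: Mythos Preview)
Your argument is correct and follows essentially the same route as the paper's proof: reduce to a single prime, apply Lemma~\ref{coprimePhi} to the Frattini quotient to bound the induced automorphism group, use Lemma~\ref{Ncentral} on the core of the centralizer to produce a large central subgroup, and finish with Schur's theorem. The only difference is cosmetic, at the very end: the paper applies Schur to the whole group $QH$ (observing that both $|H|$ and $|Q:N|$ are $m$-bounded, hence so is $|G:Z(G)|$), which bounds $|G'|\ge|Q|=|[Q,H]|$ in one stroke; your separate treatment of $W'$ and $W/W'$ is correct but avoidable.
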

\begin{proof} As $[Q,H,H]=[Q,H]$ by Lemma \ref{coprime}, we can replace $Q$ with $[Q,H]$ and simply assume that $Q=[Q,H]$. Consequently, as $Q$ is the direct product of its Sylow subgroups, $P=[P,H]$ for each Sylow subgroup $P$ of $Q$. As $|P:C_P(x)|\le|Q:C_Q(x)|\leq m$, it follows that $H$ acts trivially on $P$ whenever $p>m$. Taking into account that $P=[P,H]$ we conclude that all prime divisors of the order of $Q$ are at most $m$. Hence, the number of Sylow subgroups of $Q$ is $m$-bounded. Therefore, without loss of generality, we may assume that $Q$ is a $p$-group and $H$ acts faithfully on $Q$.  
 As $(|Q|,|H|)=1$, by  \cite[(24.1)]{asch}  the action on $Q/\Phi(Q)$ is faithful.   
 So, by Lemma \ref{coprimePhi} we obtain that $|H|$ is $m$-bounded.
It follows that $|Q:C_Q(H)|$ is $m$-bounded. 
 Let $N$ be the intersection of all conjugates in $G$
of $C_Q(H)$. Then $N$ is a normal subgroup of $G$ of $m$-bounded index. 
We deduce from Lemma \ref{Ncentral} that $N$ is a subgroup of the center of $G$.
 Thus, Schur's theorem  \cite[4.12]{robinson2} tells us that $G'$ has finite $m$-bounded order, and the lemma follows.
\end{proof}

\section{Coprime commutators}

In this section we analize the structure of profinite groups  with  less than $2^{\aleph_0}$  coprime commutators.
 We wish to prove that  
  these commutators generate a finite subgroup. 

\begin{lemma} \label{ph} Let $G$ be a profinite group  with  less than $2^{\aleph_0}$  coprime commutators. 
Assume that $G$ is a product of a subgroup $H$ and a  normal pronilpotent  subgroup $Q$  with $(|Q|,|H|)=1$. 
 Then $[Q,H]$ is finite.
\end{lemma}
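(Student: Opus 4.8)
The plan is to reduce the statement, in several steps, to the finite case handled by Lemma~\ref{zero}.

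\emph{Reduction to $Q$ a pro-$p$ group.} Write $Q=\prod_p Q_p$ as the Cartesian product of its Sylow subgroups. For $h\in H$ we have $(|h|,|Q|)=1$, so by Lemma~\ref{lem:oss0} (applied with $Q$ in the role of the subgroup) $C_Q(h)=\prod_p C_{Q_p}(h)$ has finite index in $Q$; hence every $h$ centralizes all but finitely many of the $Q_p$. I claim $H$ itself centralizes all but finitely many $Q_p$: otherwise, choosing primes $p_1,p_2,\dots$ with $[Q_{p_n},H]\ne 1$ and setting $L_N=\bigcap_{n\ge N}C_H(Q_{p_n})$, we obtain closed subgroups increasing with $N$ whose union is $H$; but the union of an increasing sequence of proper closed subgroups of a profinite group is again proper (Baire category theorem), so $L_N=H$ for some $N$, a contradiction. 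Thus $[Q,H]=\prod_{j=1}^r[Q_{p_j},H]$ for finitely many primes, and it suffices to treat each factor; so from now on $Q$ is a pro-$p$ group.

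\emph{Reduction to $Q$ finitely generated.} By the profinite analogue of Lemma~\ref{coprime} we have $[Q,H,H]=[Q,H]$, so replacing $Q$ by $[Q,H]$ we may assume $Q=[Q,H]$. Put $V=Q/\Phi(Q)$, an elementary abelian pro-$p$ group carrying a coprime $H$-action with $V=[V,H]$; as $V$ is a section of $G$, Lemma~\ref{lem:oss0} shows each $C_V(h)$ is open, so $[V,h]$ is finite for every $h\in H$. If $|[V,h]|\le p^n$ for all $h$, then every finite $H$-quotient $\overline V$ of $V$ satisfies $|[\overline V,u]|\le p^n$ for all $u$ in the ($p'$-)image of $H$, whence $|\overline V|=|[\overline V,H]|$ is $p^n$-bounded by Lemma~\ref{coprimePhi}, so $V$ is finite. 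Hence it suffices to rule out the case in which $V$ is infinite.

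\emph{The crux: $V$ cannot be infinite.} Suppose $V$ is infinite. By the previous paragraph $|[V,h]|$ is unbounded over $h\in H$; replacing $H$ by the closed subgroup generated by a countable family of witnessing elements, and $V$ by $[V,H]$, we may assume that $H$ (hence $V$) is second countable, that $V$ is still infinite, and that the ambient group still has fewer than $2^{\aleph_0}$ coprime commutators. Fix a base $H=U_0\supseteq U_1\supseteq\cdots$ of neighbourhoods of the identity by open normal subgroups. One checks that each $[V,U_n]$ is infinite — otherwise, as $V=[V,H]$, the finite group $H/U_n$ would act on the infinite group $V/[V,U_n]$ with trivial centralizer, whereas Lemma~\ref{lem:oss0} together with the coprime decomposition forces such a group to be finite — so, applying Lemma~\ref{coprimePhi} to finite quotients, one may choose $h_n\in U_n$ with $|[V,h_n]|\to\infty$. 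Then $h_n\to 1$, so for each $S\subseteq\{1,2,\dots\}$ the product $h_S=\prod_{n\in S}h_n$ converges in $H$. The set of coprime commutators lying in $V$ equals $\bigcup_{h\in H}[V,h]$, the continuous image of $H\times V$ under $(h,v)\mapsto[v,h]$, hence an analytic subset of the Polish space $V$; by the perfect set property it is either countable or of cardinality $2^{\aleph_0}$. Using the elements $h_S$ together with suitably chosen $v_S\in V$ one exhibits uncountably many distinct values $[v_S,h_S]$, so this set has cardinality $2^{\aleph_0}$, contradicting the hypothesis. Therefore $V$ is finite.

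\emph{Conclusion.} Since $Q/\Phi(Q)$ is finite, $Q$ is topologically finitely generated; if $q_1,\dots,q_d$ generate $Q$, then $C_H(Q)=\bigcap_i C_H(q_i)$ has finite index in $H$ by Lemma~\ref{lem:oss0}, so $U:=H/C_H(Q)$ is finite and $m:=\max_{h\in H}|Q:C_Q(h)|=\max_{u\in U}|Q:C_Q(u)|$ is finite. For every open normal subgroup $N$ of $G$ the finite group $G/N=(QN/N)(HN/N)$ satisfies the hypotheses of Lemma~\ref{zero} with this $m$, so the order of $[Q,H]N/N=[QN/N,HN/N]$ is $m$-bounded; passing to the inverse limit over $N$ shows $|[Q,H]|$ is $m$-bounded, in particular finite. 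The main obstacle is the third step: the remaining reductions are routine, but excluding an infinite Frattini quotient genuinely uses the cardinality hypothesis, and the construction of a continuum of distinct coprime commutators out of a null sequence $h_n\to1$ with $|[V,h_n]|\to\infty$ is the delicate point.
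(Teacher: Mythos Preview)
Your reductions in the first, second and fourth paragraphs are essentially correct (the Baire argument in Step~1 works because an increasing union of proper closed subgroups of a profinite group is proper: once one of them is open, the indices stabilise and the union equals one of the terms). The conclusion paragraph is fine too. The difficulty is entirely in Step~3, and there you have a real gap: you assert that ``using the elements $h_S$ together with suitably chosen $v_S\in V$ one exhibits uncountably many distinct values $[v_S,h_S]$'', but you give no argument for this, and I do not see one along the lines you suggest. Knowing that $h_n\to 1$ with $|[V,h_n]|\to\infty$ does not by itself force the family $\{[v,h_S]:v\in V,\ S\subseteq\mathbb N\}$ to be uncountable; different $h_S$ could very well produce nested or repeated commutator subgroups, and each $[V,h_S]$ is finite by hypothesis. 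Since this is exactly the point where the cardinality hypothesis must bite, the proof as written is incomplete.

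The paper sidesteps this entirely by applying Baire category not to $V\times H$ or to $2^{\mathbb N}$ but directly to $H$: the closed sets $C_j=\{x\in H:\ |x^Q|\le j\}$ cover $H$ by Lemma~\ref{lem:oss0}, so some $C_s$ contains an open coset $aJ$; the identity $x^y=(a^{-1})^y(ax)^y$ then gives $|Q:C_Q(x)|\le s^2$ for every $x$ in the open normal subgroup $J$. Now Lemma~\ref{zero} applies verbatim to finite quotients and shows $[Q,J]$ is finite; passing to $G/[Q,J]$ makes $H$ finite, and one finishes with Lemma~\ref{Ncentral} and Schur. This replaces your Steps~1--3 by a single short paragraph and never needs to construct a continuum of commutators by hand. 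If you want to rescue your approach, note that assuming the analytic set $\{[v,h]:v\in V,\ h\in H\}$ is countable, Baire on $V\times H$ gives an open box $U\times W$ on which $[v,h]$ is constant; this forces an open subgroup $H_0\le H$ to centralise an open subgroup $V_0\le V$, hence $|[V,h]|\le|V/V_0|$ for all $h\in H_0$, contradicting your own observation that $[V,U_n]$ is infinite for every open $U_n$. That would close the gap, but it is still more work than the paper's route.
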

\begin{proof} By the profinite version of Lemma \ref{coprime} we have  $[Q,H,H]=[Q,H]$. So we can replace $Q$ with $[Q,H]$ and assume that $Q=[Q,H]$. By Lemma \ref{lem:oss0},   
 $|Q:C_Q(x)|$ is finite for any $x\in H$. 
For each positive integer $j$ consider the set:
$$C_j=\{x\in H| \;|x^Q|\le j\}.$$
Note that the sets $C_j$ are closed 
(see for instance \cite[Lemma 5]{LP}).
As the union of the sets $C_j$ is  $H$, by the Baire category theorem (cf. \cite[p.200]{Ke}) at least one of the sets $C_j$ has non-empty interior  with respect to $H$.
So there exists a positive integer $s$, an open normal subgroup $J$ of $H$ and an element $a \in H$ 
such that $|(ax)^Q|\le s$ for every $x\in J$. For $x\in J$ and $y\in Q$ write $x^y=(a^{-1})^y(ax)^y$. Note that  $|(a^{-1})^Q|\le s$ and $|(ax)^Q|\le s$.
Therefore $|Q:C_Q(x)|\leq s^2$ for any $x\in J$. By Lemma \ref{zero} we conclude that $[Q,J]$ is finite.

Replacing $Q$ with  the quotient group  $Q/[Q,J]$ and $H$ with the quotient group
$H/J$, we can assume that $H$ is finite and thus also $|Q:C_Q(H)|$ is finite. Note that $C_Q(H)$ has only finitely many conjugates in $G$.
Let $N$ be the intersection of all conjugates of $C_Q(H)$. Clearly, $|Q:N|$ is finite. 
  It follows from  Lemma \ref{Ncentral}  that $N$ is a subgroup of the center of $G$. 
Thus the center of $G$ has finite index in $G$, and the result now follows from the theorem 
of Schur \cite[4.12]{robinson2}.
\end{proof}


\begin{lemma}\cite[Lemma 2.4]{AST}\label{ast} If $G$ is a finite metanilpotent group, 
then $\gamma_\infty(G)=\prod_p[K_p,H_{p'}]$, where $K_p$ is a $p$-Sylow 
 subgroup of $\gamma_\infty(G)$ and $H_{p'}$ is a $p'$-Hall subgroup of $G$.
\end{lemma}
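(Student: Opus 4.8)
The plan is to peel the identity down prime by prime, ending with a statement about a centralizer. Since $G$ is metanilpotent, $N:=\gamma_\infty(G)$ is nilpotent, so $N=\prod_pK_p$, where $K_p$ is the Sylow $p$-subgroup of $N$; each $K_p$ is characteristic in $N$ and hence normal in $G$. Because $[K_p,H_{p'}]\le K_p$ for every $p$ and the $K_q$ commute elementwise, $\prod_p[K_p,H_{p'}]$ is a subgroup of $N$, which already yields the inclusion $\prod_p[K_p,H_{p'}]\le\gamma_\infty(G)$. For the opposite inclusion it is enough to show that $K_p=[K_p,H_{p'}]$ for every prime $p$, since then $\prod_p[K_p,H_{p'}]=\prod_pK_p=N$.

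Fix $p$. Passing to $G/\prod_{q\ne p}K_q$ --- which preserves metanilpotency, carries $\gamma_\infty$ to $\gamma_\infty$, and, since the Hall $p'$-part of $N$ is a normal $p'$-subgroup of $G$ and so lies in $H_{p'}$, sends $H_{p'}$ onto a Hall $p'$-subgroup of the quotient --- one may assume that $N=\gamma_\infty(G)$ is a $p$-group; write $H=H_{p'}$, so that the goal becomes $N=[N,H]$. I would first check that $[N,H]$ is normal in $G$. As $G/N$ is nilpotent, $HN/N=O_{p'}(G/N)$ is normal in $G/N$, hence $HN$ is normal in $G$ and $[N,H]$ is normal in $HN$ (because $N$ is normal in $HN$). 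For $g\in G$ we have $[N,H]^g=[N,H^g]$, and $H^g$ is a complement to $N$ in $HN$; by the conjugacy part of the Schur--Zassenhaus theorem $H^g=H^n$ for some $n\in N$, so $[N,H^g]=[N,H]^n=[N,H]$ as $n$ normalizes $[N,H]$. Thus $[N,H]$ is normal in $G$, and passing to $G/[N,H]$ --- where $\gamma_\infty$ is still a $p$-group, now centralized by the image of $H$ --- we are reduced to proving: if $\gamma_\infty(G)$ is a $p$-group centralized by a Hall $p'$-subgroup $H$ of $G$, then $\gamma_\infty(G)=1$.

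For this, put $N=\gamma_\infty(G)$. From $H\le C_G(N)$ and $G=PH$ for a Sylow $p$-subgroup $P$ it follows that $G/C_G(N)$ is a $p$-group, so $O^p(G)\le C_G(N)$. On the other hand $N\le O^p(G)$, because $G/O^p(G)$ is a $p$-group and therefore nilpotent, so $\gamma_\infty(G)\le O^p(G)$. Combining, $N\le Z(O^p(G))$; and since $O^p(G)/N$ is nilpotent (a quotient of the nilpotent group $G/N$), the group $O^p(G)$ is nilpotent. Write $O^p(G)=S\times T$ with $S$ its Sylow $p$-subgroup and $T$ its Hall $p'$-subgroup; both factors are characteristic in $O^p(G)$ and hence normal in $G$. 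Since $G/O^p(G)$ is a $p$-group, comparing orders gives $|T|=|G|_{p'}$, so $T$ is a normal Hall $p'$-subgroup of $G$, and therefore $T=H$. Now $G/H$ is a $p$-group, hence nilpotent, so $\gamma_\infty(G)\le H$; but $\gamma_\infty(G)=N$ is a $p$-group contained in the $p'$-group $H$, which forces $N=1$, completing the reduced case and hence the proof.

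The step I expect to be the real obstacle is the normality of $[K_p,H_{p'}]$ in $G$. In contrast with the inclusion $\prod_p[K_p,H_{p'}]\le\gamma_\infty(G)$, which is immediate, this is not a formal fact: it seems to require the conjugacy theory of Hall subgroups (or of complements, via Schur--Zassenhaus) in an essential way. Granting it, the reduction to a single prime and then to the centralizer statement is routine, and the centralizer statement is dispatched quickly by playing $\gamma_\infty(G)$ off against $O^p(G)$.
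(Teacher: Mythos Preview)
The paper does not supply its own proof of this lemma: it is quoted verbatim from \cite{AST} and used as a black box. So there is no in-paper argument to compare against. That said, your proof is correct and self-contained, and it is worth a couple of remarks.

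Your reduction strategy is sound. The passage to $G/\prod_{q\ne p}K_q$ is legitimate for the reasons you give, and the recovery of $K_p=[K_p,H_{p'}]$ in $G$ from the corresponding equality in the quotient follows since $K_p\cap\prod_{q\ne p}K_q=1$. The normality argument for $[N,H]$ via Schur--Zassenhaus conjugacy of complements inside the normal subgroup $HN$ is exactly right, and the endgame through $O^p(G)$ is clean: once $O^p(G)$ is nilpotent its Hall $p'$-part is a normal Hall $p'$-subgroup of $G$, forcing $G/H$ to be a $p$-group and hence $\gamma_\infty(G)\le H$, a contradiction unless $N=1$.

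One shortcut you might consider, in the spirit of the coprime-action facts the paper already uses (Lemma~\ref{coprime}): after reducing to $N=\gamma_\infty(G)$ a $p$-group with $[N,H]=1$, you can bypass the $O^p(G)$ analysis. Since $H\le C_G(N)$ and $C_G(N)\unlhd G$, the quotient $G/C_G(N)$ is a $p$-group; but $N\le C_G(N)$, so $C_G(N)/N$ is nilpotent and hence $C_G(N)$ is nilpotent, whence $G$ is ($p$-group)-by-nilpotent and therefore nilpotent, giving $N=\gamma_\infty(G)=1$ at once. This is essentially your argument with $O^p(G)$ replaced by $C_G(N)$, saving the detour through the Sylow decomposition of $O^p(G)$. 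Either way, the key non-formal ingredient is precisely the one you flagged: the conjugacy of Hall complements that makes $[N,H]$ normal.
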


\begin{lemma}\label{metapro} Let $G$ be a metapronilpotent  group  with  less than $2^{\aleph_0}$  coprime commutators. Then  for each $p\in\pi(\gamma_\infty(G))$ the $p$-Sylow subgroup of $\gamma_\infty(G)$ is finite.
\end{lemma}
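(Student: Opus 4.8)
The plan is to derive the statement by combining Lemma \ref{ph} with the profinite version of the structure theorem Lemma \ref{ast}. Write $N=\gamma_\infty(G)$ and, for a prime $q$, let $K_q$ denote the Sylow $q$-subgroup of $N$. Since $G$ is metapronilpotent, $N$ is pronilpotent, hence the direct product of its Sylow subgroups; each $K_q$ is characteristic in $N$ and $N$ is characteristic in $G$, so every $K_q$ is normal in $G$. Moreover $G$, being pronilpotent-by-pronilpotent, is prosoluble, so it possesses Hall $\pi$-subgroups for every set of primes $\pi$.

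First I would invoke the profinite analogue of Lemma \ref{ast}: $N=\prod_q[K_q,H_{q'}]$, where $H_{q'}$ is a Hall $q'$-subgroup of $G$, chosen from a fixed Sylow basis of $G$ so that the finite decompositions assemble under the usual inverse limit argument. Because $K_q\trianglelefteq G$ we have $[K_q,H_{q'}]\le K_q$; projecting the equality $\prod_q[K_q,H_{q'}]=N=\prod_q K_q$ onto the $q$-th factor then forces $[K_q,H_{q'}]=K_q$ for every $q$. In particular $K_p=[K_p,H_{p'}]$ for the prime $p$ in the statement.

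Next I would pass to the subgroup $L=K_pH_{p'}$. Since $K_p$ is normal in $G$, $L$ is a closed subgroup of $G$, and every coprime commutator of $L$ is a coprime commutator of $G$, so $L$ has fewer than $2^{\aleph_0}$ coprime commutators. Inside $L$, the subgroup $K_p$ is normal and pronilpotent (it is pro-$p$), one has $(|K_p|,|H_{p'}|)=1$, and $L=H_{p'}K_p$; thus Lemma \ref{ph} applies to $L$ and yields that $[K_p,H_{p'}]$ is finite. Combined with the equality $K_p=[K_p,H_{p'}]$ obtained above, this gives that $K_p$ is finite, which is exactly the assertion.

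The main content is already packaged in Lemma \ref{ph}, so there is no deep obstacle here; the points that need care are (i) the setup of the profinite version of Lemma \ref{ast}, in particular choosing the Hall subgroups coherently so the inverse limit of the finite decompositions behaves as expected, and (ii) the bookkeeping needed to apply Lemma \ref{ph} — verifying that $L$ inherits the cardinality bound on coprime commutators and that the normality and coprimality hypotheses are met. Once $K_p=[K_p,H_{p'}]$ is established, the conclusion is immediate.
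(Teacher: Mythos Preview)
Your proposal is correct and follows essentially the same route as the paper: use the profinite version of Lemma~\ref{ast} to obtain $K_p=[K_p,H_{p'}]$, then apply Lemma~\ref{ph} inside the subgroup $K_pH_{p'}$ to conclude that this commutator, and hence $K_p$, is finite. The paper's proof is terser---it does not spell out the passage to the subgroup $L=K_pH_{p'}$ or the projection argument giving $K_p=[K_p,H_{p'}]$---but the underlying argument is identical.
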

\begin{proof} Note that $\gamma_\infty(G)$ is pronilpotent. Choose $p\in\pi(\gamma_\infty(G))$ and let $P$ be the $p$-Sylow subgroup of $\gamma_\infty(G)$. The profinite version of Lemma \ref{ast} implies that $P=[P,H]$, where $H$ is a $p'$-Hall subgroup of $G$. Lemma \ref{ph} now shows that $P$ is finite, as required.
\end{proof}

Recall that any prosoluble group $G$ has a Sylow basis (a family of pairwise permutable
$p_i$-Sylow subgroups $P_i$ of $G$, exactly one for each prime $p_i \in \pi(G)$), and any two Sylow bases are conjugate (see \cite[Proposition 2.3.9]{rib-zal}). The basis normalizer (also known as the system normalizer) of such a Sylow basis in $G$ is $T =\cap_{i} N_G(P_i).$ If $G$ is a prosoluble group and $T$ is a basis normalizer in $G,$ then $T$ is pronilpotent and  $G = \gamma_\infty (G) T$ (see \cite[Lemma 5.6]{reid}).

\begin{lemma}\label{tttt}  Let $G$ be a metapronilpotent profinite group  with  less than $2^{\aleph_0}$  coprime commutators.
 Then $\gamma_\infty(G)$ is finite.
\end{lemma}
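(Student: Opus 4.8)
The plan is to reduce the lemma to a purely arithmetic statement and then, assuming that fails, to manufacture $2^{\aleph_0}$ coprime commutators. Write $N=\gamma_\infty(G)$. Since $G$ is metapronilpotent, $N$ is pronilpotent, so $N=\prod_p P_p$ is the direct product of its Sylow subgroups, each $P_p$ being normal in $G$; by Lemma~\ref{metapro} each $P_p$ is finite. Hence $N$ is finite as soon as $\pi(N)$ is finite, and it suffices to prove $\pi(N)$ is finite. Suppose, for contradiction, that $\pi(N)$ is infinite.

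First I would put the acting side inside a pronilpotent group. As $G$ is prosoluble it has a basis normalizer $T$, which is pronilpotent and satisfies $G=NT$; write $T=\prod_q T_q$ and $T_{p'}=\prod_{q\neq p}T_q$. Combining the profinite form of Lemma~\ref{ast} with standard properties of Sylow bases in prosoluble groups, one gets $P_p=[P_p,T_{p'}]$ for every $p\in\pi(N)$ (the $p'$-Hall subgroup of $N$ centralises $P_p$, so only the $T$-part of a $p'$-Hall subgroup of $G$ contributes to the commutator). In particular $P_p\neq1$ is not centralised by $T_{p'}$, so for each $p\in\pi(N)$ there is a prime $q=q(p)\neq p$ with $[P_p,T_q]\neq1$. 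Now split into two cases according to whether some single $T_q$ acts non-trivially on infinitely many $P_p$.

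\emph{Case 1: some prime $q^\ast$ satisfies $[P_p,T_{q^\ast}]\neq1$ for infinitely many $p\in\pi(N)$.} Let $\Pi_0$ be such an infinite set, with $q^\ast\notin\Pi_0$. I would run the Baire category argument of Lemma~\ref{ph} inside the pro-$q^\ast$ group $T_{q^\ast}$: if no $g\in T_{q^\ast}$ moved infinitely many $P_p$, then $T_{q^\ast}=\bigcup_{F}\bigcap_{p\in\Pi_0\setminus F}C_{T_{q^\ast}}(P_p)$ is a countable union of closed subgroups, one of which is open of finite index, and a pigeonhole over its finitely many coset representatives gives a contradiction. So there is $g^\ast\in T_{q^\ast}$ with $\Pi:=\{p\in\Pi_0:[P_p,g^\ast]\neq1\}$ infinite. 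Picking $v_p\in P_p$ with $[v_p,g^\ast]\neq1$ for $p\in\Pi$ and setting $x_S=\prod_{p\in S}v_p$ for $S\subseteq\Pi$, one has $[x_S,g^\ast]=\prod_{p\in S}[v_p,g^\ast]$ (a single acting element, so the components multiply), whose $p$-component is non-trivial exactly for $p\in S$; and $\pi(x_S)\subseteq\Pi$ is disjoint from $\pi(g^\ast)=\{q^\ast\}$, so each $[x_S,g^\ast]$ is a coprime commutator. Thus $\{[x_S,g^\ast]:S\subseteq\Pi\}$ has cardinality $2^{\aleph_0}$ — a contradiction.

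\emph{Case 2: for every prime $q$, $[P_p,T_q]\neq1$ for only finitely many $p\in\pi(N)$.} Then $q(\cdot)$ has finite fibres, so since $\pi(N)$ is infinite I can greedily extract distinct primes $p_1,p_2,\dots\in\pi(N)$ for which the primes $q_i:=q(p_i)$ are pairwise distinct and satisfy $q_i\neq p_j$ for all $i,j$. Choose $h_i\in T_{q_i}$ with $[P_{p_i},h_i]\neq1$ and $v_i\in P_{p_i}$ with $[v_i,h_i]\neq1$. Since the $q_i$ are distinct primes and $T=\prod_q T_q$, the $h_i$ pairwise commute and $R:=\overline{\langle h_i:i\in\mathbb{N}\rangle}=\prod_i\langle h_i\rangle$, a product of non-trivial finite cyclic groups of pairwise coprime orders. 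Put $x=\prod_i v_i\in N$ and, for $S\subseteq\mathbb{N}$, $h_S=\prod_{i\in S}h_i\in R$; then $\pi(x)=\{p_i\}$ and $\pi(h_S)\subseteq\{q_i\}$ are disjoint, so every $[x,h_S]$ is a coprime commutator. Now $[x,h_S]=[x,h_{S'}]$ iff $h_Sh_{S'}^{-1}\in C_R(x)$, and a closed subgroup of a product of finite groups of pairwise coprime orders is a product of subgroups of the factors, so $C_R(x)=\prod_i D_i$ with $D_i\le\langle h_i\rangle$; no $D_i$ equals $\langle h_i\rangle$, for then $h_i$ would centralise $x$, hence its $p_i$-component $v_i$, against $[v_i,h_i]\neq1$. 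As a non-identity power $h_i^{\pm1}$ generates $\langle h_i\rangle$ and so lies outside $D_i$, inspecting $q_i$-components shows $h_Sh_{S'}^{-1}\in C_R(x)$ forces $S=S'$. Hence $\{[x,h_S]:S\subseteq\mathbb{N}\}$ is a set of $2^{\aleph_0}$ distinct coprime commutators — again a contradiction, which completes the proof. The hard part is Case~2: there no single element of $G$ acts non-trivially on infinitely many $P_p$, so the commutators cannot be obtained by varying one base point; the device that overcomes this is to vary the acting element over $R\cong\prod_i\langle h_i\rangle$ and to extract distinctness of the $[x,h_S]$ from the description of closed subgroups of $R$, which sidesteps the need to control the cross terms $[P_{p_j},h_i]$ with $i\neq j$ (these need not vanish).
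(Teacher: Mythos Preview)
Your argument is correct, modulo a harmless imprecision: the groups $\overline{\langle h_i\rangle}$ in Case~2 are procyclic pro-$q_i$ groups rather than finite cyclic, but the fact you need---that a closed subgroup of $\prod_i \overline{\langle h_i\rangle}$ decomposes as a product over the factors---holds just as well, since any closed subgroup of a pronilpotent group is the product of its Sylow subgroups. Your deduction that $h_i,h_i^{-1}\notin D_i$ and hence $S\mapsto [x,h_S]$ is injective then goes through unchanged.

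The paper takes a shorter route by invoking Lemma~\ref{ph} a second time. For every prime $q$ the subgroup $[O_{q'}(N),T_q]$ is finite, so $T_q$ acts non-trivially on only finitely many $P_p$; this makes your Case~1 vacuous. Since each $P_p$ is finite, the set $\sigma_p=\{q:[P_p,T_q]\neq1\}$ is also finite, and with $\tau_p=\{p\}\cup\sigma_p$ one finds that every $\tau_p$ meets only finitely many others. Extracting an infinite family with pairwise disjoint $\tau_p$'s now forces the cross terms to vanish: $[P_{p_i},T_{q_j}]=1$ whenever $i\neq j$, so $\langle a_i,b_i\rangle$ commutes with $\langle a_j,b_j\rangle$ and $\prod_{i\in I}[a_i,b_i]=\bigl[\prod_{i\in I}a_i,\prod_{i\in I}b_i\bigr]$ is itself a coprime commutator for every $I\subseteq J$. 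Your Case~2 device---fixing $x$ and varying the acting element through $R$, then reading injectivity off the Sylow decomposition of $C_R(x)$---is a genuine alternative that avoids having to arrange vanishing cross terms, at the price of a separate Baire argument for Case~1 and a more delicate injectivity check. Since Lemma~\ref{ph} is already on hand (you used it via Lemma~\ref{metapro}), the paper's path is the more economical one here.
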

\begin{proof} Let $K=\gamma_\infty (G)$.  In view of Lemma \ref{metapro} it is sufficient to show that  $\pi(K)$ is finite.

Assume by contradiction that $\pi(K)$ is infinite.
 Let $T$ be a basis normalizer of a Sylow basis in $G$. Note that both $T$ and $K$ are pronilpotent.

Let  $\pi(K)=\{p_1, p_2, \dots, p_i, \dots \}$ 
 and for each $i=1,2, \dots$  let $P_i$ be the $p_i$-Sylow subgroup of $K$. By Lemma \ref{metapro}  we know that the subgroups $P_i$ are finite. 
 For each index $i$ observe that the group $O_{p_i'}(T)$ naturally acts on $P_i$.
Let $A_i$ be the group of automorphisms of $P_i$ induced by the action of $O_{p_i'}(T)$. It follows from Lemma \ref{ast} that  $P_i=[P_i,A_i]$. Set $\sigma_i=\pi(A_i)$. 
Then $\sigma_i$ is a finite nonempty set. Let  $q\in\pi(T)$ and let $T_q$ be a $q$-Sylow  subgroup of $T$. By Lemma \ref{ph},
the subgroup $[O_{q'}(K),T_q]$ is finite. Therefore $q\in\sigma_j$ for only finitely many indices $j$. We denote the set $\{p_i\}\cup\sigma_i$ by $\tau_i$.
So for every index $i$ there exists only finitely many indexes $j$ such that $\tau_j\cap\tau_i\ne\emptyset$.

Since $\pi(K)$ is infinite, we can choose an infinite set of indices $J$ such that $\tau_i\cap\tau_j=\emptyset$ whenever $i,j$ are different 
indices in $J$. For each $i\in J$ choose  $q\in\sigma_i$ and let $Q_i$ be the $q$-Sylow subgroup of $T$.
It follows that $[P_i,Q_i]\neq1$ while $[P_j,Q_i]=1$ whenever $i,j \in J$ are such that $i \not = j$.

For each $i\in J$ choose $a_i\in P_i$ and $b_i\in Q_i$ such that $1\neq[a_i,b_i]=c_i$.
Obviously the element $c_i$ is a coprime commutator. 
Observe that by construction $\langle a_i,b_i\rangle$ commutes with $\langle a_j,b_j\rangle$ 
whenever $i\neq j$. Therefore for any subset $I\subseteq J$ the product $\prod_{i\in I}c_i$ is a coprime commutator. 
Since the set $J$ is infinite, we obtain that the cardinality of the set  of coprime commutators of the form $\prod_{i\in I}c_i$ is $2^{\aleph_0}$. 
This is a contradiction and so the lemma follows.
\end{proof}

We are now ready for the proof of  Theorem \ref{main}. 

\begin{proof}[Proof of Theorem \ref{main}]  
Let $G$ be a profinite group. Clearly, if $G$ is finite-by-pronilpotent, then  the cardinality of the set of coprime commutators in $G$ is finite. 

We need to show that if $G$ is  a profinite group with less than $2^{\aleph_0}$ coprime commutators, then $G$ is finite-by-pronilpotent. We can assume that $G$ is infinite.  

Let us first show that the Fitting subgroup $F=F(G)$ of $G$ is nontrivial. Assume for a contradiction that $F=1$. 
 If $x$ is any nontrivial coprime commutator, then  by Lemma \ref{lem:fin-conj-cl} the conjugacy class $x^G$ is finite. Hence 
$|G: C_G(x)| $ is finite and the same holds for any of the (finitely many) conjugates of $x$ in $G$. Therefore, $|G: C_G( \langle x^G \rangle)|$ is finite and so 
 $\langle x^G\rangle$ is central-by-finite. 
Taking into account that $F=1$ we deduce that $\langle x^G\rangle$ is finite for any coprime commutator $x$. Thus, $G$ possesses minimal 
normal subgroups. 
 Let $N$ be the product of all minimal normal subgroups of $G$. 
If $N$ is finite, then 
 there exists an open  normal subgroup $K$ of $G$ such that $K \cap N=1$. 
 In particular, $K$ has no nontrivial minimal normal subgroups. 
 Note that $K$ is not pronilpotent, since $F=1$. So $K$ contains a nontrivial coprime commutator and, by the above argument, its Fitting subgroup is 
 nontrivial. 
  This  contradicts the assumption that $F=1$ and therefore $N$ is infinite. 

 Thus $N$ is the Cartesian product of infinitely many nonabelian finite simple groups $S_i$, with $i$ ranging
in a set of indices $J$. 
Note that in each $S_i$ there exists a nontrivial coprime commutator $c_i$. Namely, take a nontrivial central element $x_i$ in one Sylow $p$-subgroup of $S_i$, for some prime $p$; then $x_i$ cannot commute with all Sylow $q$-subgroups of $S_i$, for all $q\ne p$, because otherwise $x_i$ would be central in $S_i$. 
 For any subset $I\subseteq J$ the product $\prod_{i\in I}c_i$ is a coprime commutator. 
Since the set $J$ is infinite, we obtain that the cardinality of the set  of coprime commutators of the form $\prod_{i\in I}c_i$ is 
at least $2^{\aleph_0}$. 
This is a contradiction and so we conclude that  $F\neq1$. 

 Observe that $F$ actually must be infinite. Indeed, if 
 $F$ is finite, 
 then there exists a normal subgroup $H$ of finite index such that $H\cap F=1$. 
 On the other hand, by the above argument, the Fitting subgroup of $H$ is 
 nontrivial,  
 a contradiction. 
  Thus $F$ is infinite.

By Lemma \ref{tttt},   $\gamma_\infty(F_2(G))$ is finite. Let $R$ be an open normal subgroup 
of $F_2(G)$ intersecting  $\gamma_\infty(F_2(G))$ trivially.
As $R$ is pronilpotent, it follows that $F$ has finite index in $F_2(G)$. Since $F_2(G)/F=F(G/F)$,
we deduce that $G/F$ has
finite Fitting subgroup. Since we have proved above that an infinite profinite group with less than $2^{\aleph_0}$ 
coprime commutators has infinite Fitting subgroup, we conclude that $G/F$ is finite. 

Now we argue by induction on the index of  $F$  in $G$, the case where $G= F$ being trivial. 
If there exists a proper normal subgroup $N$ of $G$  properly containing $F$, then $|N: F(N)| < |G:F|$.  By induction, $\gamma_\infty(N)$ is finite. 
 We use the bar notation in the quotient group $\bar{G}=G/\gamma_\infty(N)$. As $|\bar G: F(\bar G)|< |G:F|$, 
by induction we deduce that $\gamma_\infty (\bar{G})$ is finite, hence $\gamma_\infty (G)$ is finite as well. 

If $G/F$ is abelian,  then $G$ is metapronilpotent and in view of Lemma \ref{tttt} the claim is proved. 

So we are left with the case where the quotient $G/F$ is nonabelian simple. Let $p$ be a prime divisor of $G/F$. 
Choose a 
$p$-element $g \in G\setminus F$. Since $F \langle g \rangle $ is metapronilpotent, 
by  Lemma \ref{tttt} we conclude that $\gamma_\infty (F \langle g \rangle ) $ is finite. 
 As $\gamma_\infty (F \langle g \rangle ) $ is normal in $F$, the normalizer of $\gamma_\infty (F \langle g \rangle ) $ 
in $G$ has finite index in $G$. Thus $\langle \gamma_\infty (F \langle g \rangle ) \rangle^G $ is 
the product of finitely many conjugates of $\gamma_\infty (F \langle g \rangle ) $, which are all normal in $F$.
 Therefore,  $\langle \gamma_\infty (F \langle g \rangle ) \rangle^G $ is finite. 
 Passing to the quotient over $\langle \gamma_\infty (F \langle g \rangle ) \rangle^G $, we can assume that  $F \langle g \rangle $ is pronilpotent. 
Let $\{t_1, \dots , t_s\}$ be a transversal of $F$ in $G$ and note that 
  each $F \langle g^{t_i}  \rangle $ is pronilpotent.  
 So $W=O_{p'}(F)$  centralizes  each $g^{t_i}$. Moreover $W$  centralizes the $p$-Sylow subgroup $P$ of  $F$.

Let $G_1=\langle g^{t_1}, \dots , g^{t_s} \rangle$ and note that $G=FG_1=WPG_1$. 
As $[W, PG_1]=1,$ it follows that $PG_1$ is normal in $G.$
 
 Note that if $ \gamma_\infty ( PG_1)=1$, then $G$ is pronilpotent, being a product of two normal pronilpotent subgroups. Hence  $\gamma_\infty (G)= \gamma_\infty ( PG_1)$ and, since $F \cap PG_1=F(PG_1)$, we can assume without loss of generality that $G=PG_1$. 
 We still have that $G/F$ is a nonabelian simple group and now $O_{p'}(F)$  is central in $G$. 
 Since $G/F$ is a nonabelian  simple group, we can choose a prime $q \neq p$ and a $q$-element $h$ in $G \setminus F$ and argue as above. Thus, given  a  transversal $\{r_1, \dots , r_k\}$  of $F$ in $G$, we can assume that  each $F \langle h^{r_i}  \rangle $ is pronilpotent. Let $G_2=\langle h^{r_1}, \dots , h^{r_s} \rangle$ and note that $G=FG_2$. Now we get that the Hall $q'$-subgroup $O_{q'}(F)$ of $F$ centralizes  each $h^{r_i}$, hence $[O_{q'}(F),G_2]=1$. Moreover $O_{q'}(F)$  centralizes the $q$-Sylow subgroup $Q$ of  $F$, hence $$[O_{q'}(F), QG_2]=1.$$  In particular $QG_2$ is normal in $G=FG_2=O_{q'}(F)QG_2$ and thus $\gamma_\infty (G)= \gamma_\infty ( QG_2)$. Moreover $F(QG_2)=F\cap QG_2$. 
  Note that  both the $p'$-Hall subgroup and the  $q'$-Hall subgroup of $F(QG_2)$ are central in $QG_2$, hence $F(QG_2)$ is central as well. Since $ F(QG_2)$ has finite index in $QG_2$, it follows from the theorem of Schur that $\gamma_\infty (QG_2)=\gamma_\infty(G)$ is finite, as claimed. 
\end{proof}

\section{Anti-coprime commutators}

In this section we prove that a profinite group with less than $2^{\aleph_0}$  anti-coprime commutators has finite derived subgroup.

In  \cite{ben-pavel} the strong conciseness of several words was established. Among these are the simple commutator $[x,y]$ and the $2$-Engel word $[x,y,y]=[[x,y],y]$. Thus, we have 

\begin{proposition}\label{strong} Let $w$ be either the word $[x,y]$ or the $2$-Engel word $[x,y,y]$ and let $G$ be a profinite group. Then, the verbal subgroup $w(G)$ is finite if and only if $w$ takes less than $2^{\aleph_0}$ values in $G$. 
 \end{proposition}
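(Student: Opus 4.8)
The plan is to obtain this statement as a direct consequence of the strong conciseness theorems of \cite{ben-pavel}, so that essentially no new work is required here.

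The forward implication is immediate: if the verbal subgroup $w(G)$ is finite, then the set of $w$-values in $G$, being contained in $w(G)$, is finite, and in particular has cardinality less than $2^{\aleph_0}$. For the converse, recall that in \cite{ben-pavel} it is shown that each of the words $[x,y]$ and $[x,y,y]$ is strongly concise in the class of all profinite groups: whenever such a word takes fewer than $2^{\aleph_0}$ values in a profinite group, the corresponding verbal subgroup is finite. Hence if $w$ is one of these two words and takes less than $2^{\aleph_0}$ values in $G$, then $w(G)$ is finite, as required. So the proof reduces to quoting the appropriate results of \cite{ben-pavel}.

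Should one wish to see the mechanism behind the converse direction rather than merely cite it, the relevant ingredients are already of the kind appearing in Section~\ref{prel}: Lemma~\ref{lem:fin-conj-cl}, which turns a conjugacy class of cardinality less than $2^{\aleph_0}$ into a finite one; a Baire category argument of the type used in the proof of Lemma~\ref{ph}, reducing the analysis to a single open subgroup on which the number of relevant values is bounded; and a passage to finite, or finitely generated, quotients where the classical conciseness of $[x,y]$ and of the $2$-Engel word applies. The genuinely delicate point, carried out in \cite{ben-pavel}, is to exclude via a Cantor-set style construction the scenario in which infinitely many ``independent'' $w$-values can be multiplied together to produce $2^{\aleph_0}$ distinct values; ruling this out is the main obstacle, and it is the exact analogue of the contradiction derived at the end of the proof of Lemma~\ref{tttt}.
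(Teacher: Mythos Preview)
Your proposal is correct and matches the paper's treatment exactly: the paper states the proposition immediately after observing that the strong conciseness of $[x,y]$ and $[x,y,y]$ was established in \cite{ben-pavel}, and gives no separate proof. Your additional paragraph sketching the underlying mechanism is accurate but goes beyond what the paper provides.
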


\begin{proof}[Proof of Theorem \ref{anti-main}] 
As observed in the introduction, it is sufficient to show that the derived subgroup $G'$ of a profinite group $G$ is finite whenever $G$ has 
 less than $2^{\aleph_0}$  anti-coprime commutators. So, assume that $G$ is a group with this property.

Observe that, for all $g,h \in G$, the element $[g,h,h]$ is an anti-coprime commutator, since $$[[g,h],h]=[(h^{-1})^g h, h]=[(h^{-1})^g, h]^h.$$ Therefore, since $G$ has  less than $2^{\aleph_0}$  anti-coprime commutators, there are less than $2^{\aleph_0}$ values of the $2$-Engel word in $G$. It follows from Proposition \ref{strong} that the verbal subgroup $N$ generated by the values of the $2$-Engel word is finite. Passing to the quotient $G/N$, we can assume  that  $[g,h,h]=1$ for every $g,h \in G$, so $G$ is a $2$-Engel group. Since finite Engel groups are nilpotent (see \cite[12.3.4]{robinson}), the group $G$ is pronilpotent. 
  
As $G$ is a direct product of its  Sylow subgroups, every  simple commutator $[g,h]$ is an  anti-coprime  commutator. Indeed, we can always write $h=h_1 h_2$ and $g=g_1g_2$, where  $\pi(g_1)=\pi(h_1)$ while $(|h_2|,|g|)=(|h|,|g_2|)=1$. Then simply observe that $[g,h]=[g_1,h_1]$ is an anti-coprime commutator. 
  
So $G$ has less than $2^{\aleph_0}$ simple commutators $[g,h]$. Using Proposition \ref{strong} 
we  conclude that $G'$ is finite, as required. 
\end{proof}

\begin{center}{\textbf{Acknowledgments}}
\end{center}
The authors thank the referee for his suggestions.
The first and second author are members of INDAM and were  partially supported by BIRD185350/18;  the third author was partially supported by FAPDF and CNPq-Brazil.\\ \\
\vskip 0.4 true cm


\begin{thebibliography}{99}
\bibitem{AST} C. Acciarri, P.  Shumyatsky,  A. Thillaisundaram, Conciseness of coprime commutators in finite groups, { Bull. Aust. Math. Soc.} {\bf 89} (2014), 252--258.

\bibitem{asch}  Aschbacher, M., Finite group theory. Second edition. Cambridge Studies in Advanced Mathematics, 10. Cambridge University Press, Cambridge, 2000. 

\bibitem{ben-pavel} E. Detomi, B. Klopsch, P.  Shumyatsky, Strong conciseness in profinite groups, to appear in J. London Math. Soc. (2), 
 arXiv:1907.01344.

\bibitem{DMSconc1}  E. Detomi, M. Morigi, P.  Shumyatsky, Words of Engel type are concise in residually finite groups. Bull. Math. Sci. 9 (2019), 
 1950012, 19 pp.

\bibitem{DMSconc2}  E. Detomi, M. Morigi, P.  Shumyatsky,  Words of Engel type are concise in residually finite groups. Part II, 
to appear in Groups Geom. Dyn.,  arXiv:1905.07784
  
\bibitem{isaacs} I. M. Isaacs, Finite Group Theory, Graduate Studies in Mathematics, 92. American Mathematical
Society, Providence, RI, 2008.

\bibitem{Ke} J.\,L. Kelley,   {General Topology}, Van Nostrand, Toronto - New York - London, 1955.

\bibitem{ijac16}
 E.\,I. Khukhro, P.  Shumyatsky,  Almost Engel finite and profinite groups,  { Internat. J. Algebra Comput.} 
{26}   (2016),  973--983.

 
 \bibitem{LP} 
L. L\'evai, L. Pyber, { Profinite groups with many commuting pairs or involutions},
Arch. Math. (Basel) 75 (2000), 1--7. 

\bibitem{reid} C. D. Reid, Local Sylow theory of totally disconnected, locally compact groups,  {J. Group Theory} 
{16} (2013), 535--555. 

\bibitem{rib-zal}   L.\ Ribes,   P.\ Zalesskii, Profinite groups, A Series of Modern Surveys in Mathematics,
40. Springer-Verlag, Berlin, 2000. 


\bibitem{robinson} D. J. S. Robinson,  A course in the theory of groups, Second edition. Graduate Texts in 
Mathematics, 80. Springer-Verlag, New York, 1996.

\bibitem{robinson2}  D. J. S. Robinson, Finiteness conditions and generalized soluble groups. 
Part 1. Springer-Verlag, New York-Berlin, 1972.


\bibitem{wilson} J. S. Wilson, Profinite groups, Clarendon Press, Oxford, 1998.
\end{thebibliography}
\end{document}